\newcommand{\cF}{\mathcal{F}}
\newcommand{\C}{\mathbb{C}}
\newcommand{\Q}{\mathbb{Q}}
\newcommand{\Z}{\mathbb{Z}}
\newcommand{\cO}{\mathcal{O}}
\newcommand{\bW}{\mathsf{W}}
\newcommand{\cL}{\mathcal{L}}
\newcommand{\cI}{\mathcal{I}}
\newcommand{\cJ}{\mathcal{J}}
\newcommand{\cK}{\mathcal{K}}
\newcommand{\bE}{\mathsf{E}}
\newcommand{\bN}{\mathsf{N}}
\newcommand{\Pp}{\mathbf{P}^1}
\newcommand{\rang}{\right\rangle}
\newcommand{\lv}{\left |}
\newcommand{\Lt}{\mathfrak{t}}
\newcommand{\fZ}{\mathfrak{Z}}
\newcommand{\cZ}{\mathcal{Z}}
\newcommand{\fA}{\mathfrak{A}}
\newcommand{\lan}{\left\langle}
\newcommand{\ran}{\right\rangle}
\DeclareMathOperator{\End}{End}
\DeclareMathOperator{\Hilb}{Hilb}
\DeclareMathOperator{\Ext}{Ext}
\DeclareMathOperator{\rk}{rk}
\DeclareMathOperator{\tr}{tr}
\DeclareMathOperator{\str}{str}
\DeclareMathOperator{\Id}{Id}
\DeclareMathOperator{\pt}{pt}
\DeclareMathOperator{\ch}{ch}
\DeclareMathOperator{\td}{td}
\DeclareMathOperator{\Lie}{Lie}
\DeclareMathOperator{\supp}{supp}
\DeclareMathOperator{\Aut}{Aut}
\DeclareMathOperator{\Ker}{Ker}
\DeclareMathOperator{\zee}{\mathfrak{z}}
\DeclareMathOperator{\aut}{aut}
\newtheorem{Theorem}{Theorem}
\newtheorem{Lemma}{Lemma}
\newtheorem{Corollary}[Lemma]{Corollary}
\begin{document}
\title{Exts and Vertex Operators}
\author{Erik Carlsson and Andrei Okounkov}
\date{} \maketitle

\begin{abstract}
The direct product of two Hilbert schemes of the same surface has
natural K-theory classes given by the alternating Ext groups between 
the two ideal sheaves in question, twisted by a line bundle. 
We express the Chern classes of these
virtual bundles in terms of Nakajima operators. 
\end{abstract}
\textit{}
\section{Introduction}

\subsection{}

Let $S$ be a nonsingular quasi-projective surface. 
The Hilbert scheme $\Hilb_n S$ of $n$ points on $S$
has been the focus of numerous recent studies, see e.g.\ \cite{Goe,L,Nak2} for 
a survey. In particular, the cohomology of $\Hilb_n S$ has been 
described using certain operators acting on 
\begin{equation}
  \label{defcF}
   \cF = \bigoplus_n \cF_n\,, \quad \cF_n=H^*(\Hilb_n S,\Q)\,,
\end{equation}
introduced by Nakajima \cite{Nak1} and Grojnowski \cite{Groj}. 

In this paper we consider another natural set of operators, $\bW(\cL)$, depending
on a line bundle $\cL$ on $S$. These operators act on $\cF$, and are
defined in terms of Chern classes of sheaves of $\Ext$-groups. We prove 
an explicit formula for $\bW(\cL)$ in terms of the Nakajima 
operators, describing $\bW(\cL)$ as a \emph{vertex operator} acting on $\cF$. 

Similar operators may be defined and studied for more general moduli spaces 
of sheaves. In this paper we will be content with the case of rank-one torsion-free
sheaves on a surface (i.e.\ the Hilbert schemes of points), leaving 
generalizations to future papers. 

The main application we have in mind concerns the case of $T$-equivariant 
cohomology of $\Hilb_n \C^2$, with respect to the natural 
action of $T\cong (\C^*)^2$ on $\C^2$. In this case, the trace of $\bW(\cL)$
becomes one of Nekrasov's instanton partition functions, specifically the one 
with matter in the adjoint representation. Equivariant localization 
translates our formula into 
a rather surprising Pieri-type formula for Jack symmetric functions.

\subsection{}

Let $S$ be a nonsingular surface and $\cL$ a line bundle on $S$. 
The Hilbert scheme $\Hilb_n=\Hilb_n S$ parametrizes 
length $n$ subschemes $Z\subset S$ or, equivalently, ideal sheaves 
$$
I = \Ker \left(\cO_S \to \cO_Z\right)\,, 
$$
of colength $n$. It comes with a universal subscheme 
$$
\fZ \subset \Hilb_n S \times S
$$
whose fiber over $Z\in \Hilb_n$ is $Z$ itself. Consider
$$
\cZ_i = p_{i3}^*\left(\cO_{\fZ}\right) \in K(\Hilb_k S\times \Hilb_l S \times S)\,, \quad i=1,2\,, 
$$
where, for example, $p_{13}$ denotes the projection to the first and third factors. The main object of this 
paper is the following class
$$
\bE = {p_{12}}_*\left(\left(
\cZ_1^\vee+\cZ_2-\cZ_1^{\vee}\cdot\cZ_2\right)\cdot
p^*_{3}(\cL)\right) \in K(\Hilb_k \times \Hilb_l), 
$$
where checks denote $K$-theoretic duals. Note 
that the fibers of $p_3$ intersect 
the supports of $\cZ_i$ in finite sets,  
hence ${p_{12}}_*$ is well-defined 
without assuming $S$ is proper. 

If $S$ is proper, then 
\begin{equation}
\bE\big|_{(I_1,I_2)} = \chi(\cL) - \chi(I_1,I_2\otimes\cL)\,,
\label{Echi}
\end{equation}
where 
$$
\chi(F,G) = \sum_{i=0}^{2} (-1)^i \, \Ext^i(F,G) 
$$
for a pair of coherent sheaves $F$ and $G$ on $S$. The subtraction in 
\eqref{Echi} is also meaningful if $I_1$ and $I_2$ are invariant 
under the action of a torus $T\subset \Aut(S)$ such that $S^T$ is 
proper. In that case,  the $\Ext$-groups have finite dimensional $T$-eigenspaces, and their difference is finite-dimensional,
making \eqref{Echi} well-defined in $K_T(pt)$. For a reference on $K$-theory, especially the equivariant case, see \cite{CG}.
For reference on equivariant cohomology in algebraic geometry, see Fulton's notes \cite{F}.

\subsection{}

We have 
$$
\rk \bE = k+l
$$
and we denote 
$$
e(\bE) = c_{k+l}(\bE) \,.
$$
We define the operator $\bW(\cL) = \bW(\cL,z) \in \End(\cF)[[z,z^{-1}]]$ by its matrix elements
\begin{equation}
  \label{defW}
 \left(\bW(\cL)\, \eta, \xi\right) = z^{l-k}\int_{\Hilb_k \times \Hilb_l} p_1^*(\eta) \cup p_2^*(\xi)\cup e(\bE) \,,
 \end{equation}
where
$$
\eta \in H_T^*(\Hilb_k)\,,\quad \xi \in H_T^*(\Hilb_l),
$$
and
\[(\eta,\xi) = \int \eta \cup \xi.\]
Here and in what follows integration is defined  by equivariant localization:
\begin{equation}
  \int_X \eta = \sum_a \int_{F_a} \frac{\iota_a^* \eta}{e_a} \in \C(\Lt^*),\quad \Lt = \Lie(T),
\label{defInt}
\end{equation}
where 
$$
\iota_a: F_a \to X
$$
is the inclusion of a connected component of 
$X^T$  and $e_a$
is the Euler class of its normal bundle. The fixed-point loci 
will be always assumed proper, so $\int_{F_a}$ in 
\eqref{defInt} is the usual nonequivariant integral.  
The localization formula 
states that the integrals as defined in \eqref{defInt} commute with proper equivariant 
pushforwards, see \cite{GS} for a reference.
 
For each $v \in \cF$, $\bW(\cL)\cdot v$ is a formal power series in
$z$, $z^{-1}$ with coefficients in $\cF$ such that that coefficient of 
$z^{N}$ is $0$ for $N \ll 0$.

\subsection{}

Note that one can always twist $\cL$ by a torus character, even when the 
torus action on $S$ is trivial. The result is a scalar torus action on 
the fibers of $\bE$ and hence $e(\bE)$, as $T$-equivariant class, 
effectively includes all smaller Chern classes of $\bE$.  As we will see, 
\begin{equation}
\label{vanish}
c_i(\bE)=0\,, \quad i> k+l \,.
\end{equation}
Note also that by Serre duality,
\[\chi(I,J \otimes \cL) = \chi(J,I\otimes \cK\otimes \cL^{-1}),\] 
so that
$$
\bW(\cL,z)^* = (-1)^{\bN} \, \bW(\cK-\cL,z^{-1}) \, (-1)^{\bN},
$$
where $\cK$ is the canonical bundle of $S$, and $\bN$ is the number-of-points
operator,
$$
\bN\big|_{\cF_n} = n \, \Id\,.
$$

\subsection{}

Our goal is to relate the operator $\bW(\cL)$ to Nakajima operators on $\cF$
which are defined as follows. For $k < l$ define a cycle by
\[\fA = \left\{ (I,x,J)\ | J \subset I,\supp(J/I) = \{x\}\right\} \subset \Hilb_k S\times S \times \Hilb_l S.\]
For each $\gamma \in H^T_*(S)$ define an operator by
$$
(\alpha_{k-l}(\gamma) \, \eta, \xi ) =  \int p_1^*(\eta) \cup p_3^*(\xi) \cap p_2^*(\gamma) \,\cap [\fA]  , 
\quad \eta\in\cF_k\,, \xi\in \cF_l\, 
$$
and let $\alpha_n(\gamma) = \alpha_{-n}(\gamma)^*$ for $n>0$.
We will use the shorthand $\alpha_{-n}(\cL)$ for the case when 
$\gamma$ is the Poincar\'e dual of $c_1(\cL)$. 
Nakajima's theorem says that
\[\left[\alpha_i(\gamma),\alpha_j(\gamma')\right] = (-1)^{i-1}i\delta_{i,-j} (\gamma,\gamma')_S,\]
where the brackets denote
the supercommutator with respect to the standard $\Z/2$-grading in 
cohomology.

Now we can state our main result 

\begin{Theorem}\label{t_main} 
  \begin{equation}
    \label{mainT}
  \bW(\cL) = 
\exp\left(\sum_{n>0} \frac{(-1)^{n-1}z^{n}}{n} \, \alpha_{-n}(\cL)\right)
\exp\left(-\sum_{n>0} \frac{z^{-n}}{n} \, \alpha_{n}(\cK-\cL)\right) \,.
\end{equation}
\end{Theorem}

In particular, one can take $k=0$ in \eqref{defW}. The second factor 
in \eqref{mainT} fixes the vacuum vector 
$$
 \lv \rang = 1 \in H^*(\Hilb_0) \subset \cF\,, 
$$
while $\bE$ becomes the tautological bundle $\chi((\mathcal{O}/J)\otimes \cL)$. In this case, \eqref{mainT} 
specializes to 
a formula of M.~Lehn, see corollary 6.6 in \cite{L}. 

The product in \eqref{mainT} is a \emph{vertex operator} in the sense that it may 
be uniquely characterized by its commutation relations with the Heisenberg algebra
spanned by the Nakajima operators. It would be very interesting to find a geometric 
reason for this. Various connections and parallels between Hilbert schemes and 
vertex operator algebras may be found in the work of Li, Qin, and Wang, see e.g.\ 
\cite{LQW}. 

\begin{Corollary}\label{c_str}
$$
\str q^{\bN} \, W(\cL) = \prod_n (1-q^n)^{(\cL,\cK-\cL)-e(S)} \,,
$$
where the right hand side is the Taylor series about $q=0$, and $e(S)$ is the Euler characteristic of $S$. 
\end{Corollary}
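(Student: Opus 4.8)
The plan is to substitute the factorized formula \eqref{mainT} into the supertrace and run the standard thermal-trace computation for exponentials of Heisenberg generators. First I observe that, because the matrix element \eqref{defW} carries the factor $z^{l-k}$, only the diagonal blocks $k=l$ contribute to $\str q^{\bN}\bW(\cL,z)$; hence the supertrace is independent of $z$, which justifies writing $W(\cL)$. I write $\bW(\cL)=AB$ with
$$
A=\exp\left(-\sum_n\frac{(-z)^n}{n}\,\alpha_{-n}(\cL)\right),\qquad B=\exp\left(\sum_n\frac{z^{-n}}{n}\,\alpha^*_{-n}(\cL-\cK)\right),
$$
so that the creation part $A$ stands to the left of the annihilation part $B$ and the product is already normally ordered. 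Note that the classes $c_1(\cL)$ and $c_1(\cL-\cK)$ are even, so $A$ and $B$ are even operators and the cyclicity manipulations below carry no supertrace signs.

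The second step is the trace itself. Using $q^{\bN}\alpha_{-n}q^{-\bN}=q^n\alpha_{-n}$ together with cyclicity of $\str$, I conjugate $B$ across $q^{\bN}$ (rescaling $z$ by a power of $q$) and commute the result back through $A$. The only nonzero commutators are the central ones $[\alpha^*_{-n}(\cL-\cK),\alpha_{-n}(\cL)]$, proportional to the intersection pairing $(\cL,\cL-\cK)=-(\cL,\cK-\cL)$; all mixed modes drop out by the Nakajima relation. This produces the usual geometric recursion, whose sum peels off a central interaction factor $\prod_n\exp\!\big(a_nb_nc_n\,q^n/(1-q^n)\big)$ and leaves the bare supertrace $\str q^{\bN}\cdot\Id$. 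Inserting $a_n=-(-z)^n/n$, $b_n=z^{-n}/n$ and the commutator constant $c_n$, the powers of $z$ cancel and the interaction exponent collapses to a multiple of $\sum_n\tfrac1n\,\tfrac{q^n}{1-q^n}$. The elementary divisor-sum identity $\sum_n\tfrac1n\tfrac{q^n}{1-q^n}=-\sum_m\log(1-q^m)$ then converts this factor into $\prod_m(1-q^m)^{(\cL,\cK-\cL)}$.

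For the bare supertrace I use the Fock-space description of $\cF$ furnished by the Nakajima commutation relations: $\cF$ is the symmetric algebra on the generators $\alpha_{-n}(\gamma)$, $n\ge 1$, with $\gamma$ ranging over a homogeneous basis of $H^*(S)$ and $\alpha_{-n}$ carrying $\bN$-weight $n$. Thus $\str q^{\bN}\cdot\Id$ factorizes over modes and basis elements, an even (bosonic) class contributing $(1-q^n)^{-1}$ and an odd (fermionic) class contributing $(1-q^n)^{+1}$ to mode $n$. The net exponent per mode is $-\sum_i(-1)^{|\gamma_i|}=-e(S)$, so $\str q^{\bN}\cdot\Id=\prod_m(1-q^m)^{-e(S)}$, which is Göttsche's formula. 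Multiplying the interaction and bare factors yields $\prod_m(1-q^m)^{(\cL,\cK-\cL)-e(S)}$, as claimed.

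I expect the sign bookkeeping, rather than any conceptual point, to be the main obstacle. Two mechanisms must be reconciled. First, the supertrace signs on odd cohomology are precisely what turn the total Betti number into the Euler characteristic $e(S)$ in the bare factor; getting $(1-q^n)^{+1}$ rather than $(1-q^n)^{-1}$ for the fermionic oscillators is essential. Second, the interlocking signs in the Nakajima bracket, in the adjoint $\alpha^*_{-n}$, and in the $(-z)^n$ of \eqref{mainT} must combine so that no stray $(-1)^n$ survives in the interaction exponent; otherwise the divisor-sum identity fails to apply and the clean exponent is lost. Checking that these cancellations occur, and in particular fixing the orientation of the pairing so that $(\cL,\cK-\cL)$ enters with the correct sign, is the one step demanding genuine care.
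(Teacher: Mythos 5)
Your proposal is correct and is precisely the computation the paper intends: the paper offers no proof beyond the one-line remark that ``Commutation relations for Nakajima operators yield the following,'' and your normal-ordered trace calculation — the interaction factor $\prod_n(1-q^n)^{(\cL,\cK-\cL)}$ from the central commutators and the bare supertrace $\prod_n(1-q^n)^{-e(S)}$ from the free super-Fock space — is the standard way to carry that out. Your attention to the cancellation of the $(-1)^n$ from $(-z)^n$ against the sign in the Nakajima bracket, and to the fermionic modes flipping $(1-q^n)^{-1}$ to $(1-q^n)^{+1}$ so that $e(S)$ rather than the total Betti number appears, addresses exactly the points where care is needed.
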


\noindent
This will be deduced from the main theorem in Section \ref{s_pr_cor}

\subsection{}

Parallel structures exist in the 
$K$-theory of Hilbert schemes, see \cite{CNO}.
The authors would like to thank Davesh Maulik, Nikita Nekrasov, and the referees.

\section{Applications} 

\subsection{Nekrasov theory}

\subsubsection{}

If $\cL=\cO$ then 
$$
\bE \Big|_{\textup{diagonal in }\Hilb_n \times \Hilb_n} = T \Hilb_n\,,
$$
in $K$-theory, see Proposition 2.2 in \cite{EGL}. This explains why 
$$
\str q^{\bN} \, W(\cO) = \prod_n (1-q^n)^{-e(S)} \,,
$$
agrees with the generating functions for $e(\Hilb_n)$ obtained from 
G\"ottsche's formula \cite{Goe}. 

Twisting $\cO$ by a torus character $m$, we get the Chern polynomial of 
the tangent bundle. Hence 
$$
\str q^{\bN} \, W(\cO(m))  = \sum_n q^n \int_{\Hilb_n} c(T \Hilb_n,m) 
$$
By definition, this series is the instanton part of Nekrasov partition 
function with for the $U(1)$-adjoint matter of mass $m$, 
a subject of considerable 
physical interest,  see for example \cite{NO}. In particular,  
corollary \ref{c_str} generalizes formula (6.12) in \cite{NO}. 
More generally, we have the following 

\begin{Corollary}\label{c_tr} 
For any $\phi \in \cF$,  
\begin{equation}
  \sum_{n} q^n 
\int_{\Hilb_n} \phi \cup c(T \Hilb_n,m) = \str q^{\bN} \, \Phi \, 
W(\cO(m))\,, \label{corrF}
\end{equation}
where $\Phi: \cF \to \cF$ is the operator of multiplication by $\phi$ 
in the cohomology of $\Hilb_n$.  
\end{Corollary}

\subsubsection{}

Series of the form \eqref{corrF} are \emph{correlation functions}
in Nekrasov theory. They are studied in detail in \cite{Car}. 
Generalizing \eqref{corrF}, we denote 
$$
\lan\phi\ran = \str q^{\bN} \, \Phi \, W(\cL)= \sum_{n} q^n 
\int_{\Hilb_n S} \phi \cup e(\bE)\, \Big|_{\textup{diagonal}} \,. 
$$
Corollary \ref{c_str} is the computation of $\lan 1 \ran$. The 
next simplest correlator is computed in the following 

\begin{Corollary}\label{c_c1} 
Let $\phi=c_1(\cO/\cJ)$ be the $1$st Chern class of the 
tautological bundle $\cO/\cJ$ on $\Hilb_n S$. Then
\begin{equation}
\frac{\lan c_1(\cO/\cJ) \ran}{\lan 1 \ran} = 
\tfrac12\left(E_2(q)-E_3(q)\right) \, \int_S (c_1 c_2 - c_3) (TS \oplus \cL)
\,.
\label{<c1>}
\end{equation}
\end{Corollary}

\noindent 
Here the series
$$
E_k(q) = \sum_{n>0} n^{k-1} \frac{q^n}{1-q^n}
$$
is a $q$-analog of $\zeta(k)$ and also an $SL_2(\Z)$-Eisenstein 
series without 
constant term for $k$ positive even. 

The characteristic class $c_1 c_2-c_3$
in \eqref{<c1>} refers to the rank $3$ bundle $TS \oplus \cL$ on the 
surface $S$. Its integral over $S$ has degree $1$ in equivariant 
parameters.

\subsubsection{Chern classes of $T\Hilb$} 

Corollary \ref{c_tr} gives a procedure to compute the Chern 
classes of the tangent bundle to the Hilbert scheme $\Hilb_n S$ 
in any basis $\{\phi_i\}$ of the cohomology, such as the 
Nakajima basis. To use it, one has to identify the
corresponding operators $\{\Phi_i\}$ of multiplication by 
$\phi_i$ in cohomology. 

One algorithm for producing the multiplication operators $\Phi_i$ 
is discussed in \cite{L}. There are also other approaches, see
for example \cite{LQW,MO}. 

\subsection{Proof of corollaries \ref{c_str} and \ref{c_c1}}
\label{s_pr_cor}

\subsubsection{}

Write Theorem \ref{t_main}
as $W(\cL) = \Gamma_-(-\cL,-z)\Gamma_+(\cL-\cK,z)$, where
\[\Gamma_{\pm}(\cL,z) = \exp\left(\sum_{n>0} \frac{z^{\mp n} \alpha_{\pm n}(\cL)}{n}\right).\]
Using Nakajima's commutation relations,
\begin{align*}
&\str q^{\bN}\, \Gamma_-(-\cL,-z) \Gamma_+(\cL-\cK,z) \\
&\qquad =\str   q^{\bN}\, \Gamma_+(\cL-\cK,z)\Gamma_-(-\cL,-zq) \\
&\qquad =(1-q)^{(\cL,\cK-\cL)} \,\str   q^{\bN}\, \Gamma_-(-\cL,-zq)\Gamma_+(\cL-\cK,z) \\
&\qquad =\prod_n(1-q^n)^{(\cL,\cK-\cL)}\, \str q^{\bN}\,\Gamma_+(\cL-\cK,z) \,. 
\end{align*}
The trace of these operators is defined as a formal power series in $q$,
which makes sense because each coefficient is an operator whose trace is a finite sum.
We have used the rule that $\str(AB) = \str(BA)$ whenever $A$ and $B$ are operators which raise the degree by an even amount.
Now corollary \ref{c_str} follows from G\"ottsche's formula, and the fact that $\Gamma_+$ is lower-triangular.

\subsubsection{}

By the reduction procedure used in the proof of Theorem \ref{t_main}, 
it suffices to prove \eqref{<c1>} for a toric surface $S$. By 
localization, we can further reduce to $\C^2$. As in Section 
\ref{s_loc_C2}, we denote by $t_1,t_2$ the equivariant weights 
of $T \C^2$ and by $m$ the equivariant weight of $\cL = \cO_{\C^2}$. Then 
$$
 \int_{\C^2} (c_1 c_2 - c_3) (T\C^2 \oplus \cL)  = 
\frac{(t_1+t_2)(t_1+m)(t_2+m)}{t_1 t_2} 
$$
On the other hand, the operator $\Phi$ for $\phi=c_1(\cO/\cJ)$
is given by a formula due to M.~Lehn \cite{L1} 
$$
\Phi = -\frac{t_1+t_2}{2}\sum_{k\geq 1} (k-1)\alpha_{-k}\alpha_k+
\sum_{k,l \geq 1} \frac{t_1t_2}{2} \alpha_{-k}\alpha_{-l}\alpha_{k+l} -\frac{1}{2}\alpha_{-k-l}\alpha_{k}\alpha_{l}\,,
$$
in which we use the shorthand
\[\alpha_{-n} = \alpha_{-n}(1),\quad \alpha_n = (-1)^{n-1}t_1t_2\alpha_{-n}^*\]
so that $[\alpha_m,\alpha_n] = m\delta_{m,-n}$. Compare Lehn's commutation relations with the commutator of the above formula for $\Phi$ with $\alpha_n$.

We use the commutation relations
\[\left[\alpha_{\pm k}, \Gamma_{\mp}^m(z) \right]=  \pm mz^{\mp k} \Gamma_{\mp}^m(z),\quad \Gamma_{\pm}^m(z) = \exp\left(m\sum_k \frac{z^{\mp k}}{k}\alpha_{\pm k}\right)\]
repeatedly to 
move the vertex operators around the trace as in 
the proof of corollary \ref{c_str} above. 
It is slightly simpler to use the dual of the vertex operator under the inner-product \eqref{jsp}, because it eliminates some awkward signs. Since 
\[c_1(\cK) = -t_1-t_2 \in \C[t_1,t_2], \quad c_1(\cL)=m,\]
we get
\[W(m)^* = \Gamma_-^{m+t_1+t_2}(1)\Gamma_+^{m/t_1/t_2}(1).\]
Taking the trace against one of the terms in $\Phi$ gives, for instance,
\[ \tr q^N \alpha_{-k-l}\alpha_k\alpha_l \Gamma_-^{m+t_1+t_2}(1)\Gamma_+^{m/t_1/t_2}(1) = \]
\[ \frac{m(m+t_1+t_2)^2}{t_1t_2}\frac{q^{k+l}}{(1-q^k)(1-q^l)(1-q^{k+l})} \lan 1 \ran.\]
Combining all three terms in $\Phi$, we get
\begin{multline}\label{3series} 
-\frac{\lan c_1(\cO/\cI) \ran}{\lan 1 \ran} 
= \frac{t_1+t_2}{2}\sum_k k(k-1)\frac{q^k}{1-q^k}+\\
\frac{(t_1+t_2)(m+t_1+t_2)m}{2t_1t_2} \sum_{k} (k-1)\frac{q^k}{(1-q^k)^2}+\\
\frac{(t_1+t_2)(m+t_1+t_2)m}{2t_1t_2}\sum_{k,l}\frac{q^{k+l}}{(1-q^k)(1-q^l)(1-q^{k+l})}.
\end{multline}
The first two $q$-series above are easily seen to equal 
$E_3-E_2$ and $q\frac{d}{dq} E_1 - E_2$, respectively. The 
third is identified in the following 

\begin{Lemma} 
  \begin{equation}
\sum_{k,l>0} \frac{q^{k+l}}{(1-q^k)(1-q^l)(1-q^{k+l})} = 
E_3 - q \frac{d}{dq} E_1  \,. 
\label{ident}
\end{equation}
\end{Lemma}

\begin{proof}
We denote $[n] = q^{n/2}-q^{-n/2}$ and consider
$$
f(z) = \sum_{n>0} \frac{z^n+z^{-n}}{[n]} \,, \quad 
|q^{1/2}| < |z| < |q^{-1/2}| \,. 
$$
We have 
\begin{equation}
\textup{LHS} =  - \frac16 \, [z^0] \,f(z)^3
\label{LHS6}
\end{equation}
where $[z^0]$ denotes the coefficient of $z^0$. {}From 
$$
\frac{1}{[n][n+l]} = \frac{1}{[l]} \left(\frac{q^{n/2}}{[n]} - 
\frac{q^{(n+l)/2}}{[n+l]} \right)
$$
we find, for $l\ne 0$ and $|q|<1$, 
the following series telescopes:
$$
[z^l] \,f(z)^2 = 
\sum_{a+b=l,\, ab>0 \,\,} \frac{1}{[a][b]} +
\frac{2}{[l]} \sum_{a=1}^{|l|} \frac{q^{a/2}}{[a]} \,. 
$$
Comparing this to \eqref{LHS6} and setting $(n_1,n_2)=(l,a)$, we find 
$$
\textup{LHS} = - \sum_{n_1 \ge n_2 >0 } \frac{q^{n_2/2}}{[n_2]\, [n_1]^2} \,. 
$$
Now \eqref{ident} follows at once from the following 
analog
\begin{equation}
\sum_{n_1 \ge n_2 >0 } \frac{q^{-n_2/2}}{[n_2]\, [n_1]^2}
= \sum_{n>0} \frac{q^{n/2}}{[n]^3}
\label{zeta21}
\end{equation}
 of Euler's $\zeta(2,1)=\zeta(3)$ identity
proven by Bradley. Namely, \eqref{zeta21}
 is the $k=n=2$ case of corollary 4 in 
\cite{Bradley}. 
\end{proof}

One collects the terms in \eqref{3series} to finish the 
proof of corollary \ref{c_c1}.

\subsection{Pieri-type formulas for Jack polynomials}

Recall that algebra $\Lambda$ of symmetric functions over $\Q$ is freely generated by the power-sum symmetric functions $p_k$,
where $k=1,2,\dots$, that is,  
$$
\Lambda = \Q[p_1,p_2,p_3,\dots] \,.
$$
Consider the Jack inner product $(\,\cdot\,,\,\cdot\,)_\theta$ 
on $\Lambda$ with parameter $\theta$. This is the unique inner product such that  
$$
p_k^* = \frac{k}{\theta} \, \frac{\partial}{\partial p_k}
$$
where $p_k$ is viewed as the corresponding multiplication operator. We have $\theta=1/\alpha$, where $\alpha$ is 
the parameter used in Macdonald's book \cite{Mac}. 
The integral form $J_\mu$ of Jack symmetric functions is obtained by Gram-Schmidt orthogonalization of monomial 
symmetric functions with respect to $(\,\cdot\,,\,\cdot\,)_\theta$. It is normalized so that the coefficient of 
$p_1^{|\mu|}$ in $J_\mu$ equals $1$. 

Let $E$ be the operator of multiplication by the sum of all elementary symmetric functions
$$
E = 1 + e_1 + e_2 + \dots = \exp\left(\sum_n \frac{(-1)^n}{n} \, p_n \right) \,.
$$
and $E^*$ the adjoint operator. Specialized to the equivariant cohomology of $\Hilb_n \C^2$, 
formula \eqref{mainT} may be restated as the following Pieri-type formula for Jack polynomials: 

\begin{Corollary} 
\begin{multline}
\left(E^m (E^*)^{\theta-m-1} J_\lambda, J_\mu \right)_\theta =  (-1)^{|\lambda|} \, \theta^{-|\lambda|-|\mu|} \times \\
\prod_{\square\in \lambda} (m+a_\lambda(\square)+1+\theta \, l_\mu(\square)) 
\prod_{\square\in \mu} (m-a_\mu(\square)-\theta (l_\lambda(\square)+1)) \,.
\label{evalJack} 
\end{multline}
\end{Corollary}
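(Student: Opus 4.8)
The plan is to read the left-hand side of the corollary as a matrix element of the operator $\bW(\cL)$ acting on the localized $T$-equivariant cohomology $\cF=\bigoplus_n H^*_T(\Hilb(\C^2,n))$, and then to evaluate that matrix element directly by equivariant localization through the defining formula \eqref{defW}. The point is that the advertised double product is nothing but the fixed-point value of the Euler class $e(\bE)$: the cross terms pairing $a_\lambda$ with $l_\mu$ and $a_\mu$ with $l_\lambda$ are exactly the weights of the $\Ext$-bundle $\chi(\cL)-\chi(I_\lambda,I_\mu\otimes\cL)$ at a pair of fixed points, i.e. the ``adjoint matter'' contribution of Nekrasov's theory. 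So the geometry produces the right-hand side almost for free, and the real content is the passage between the symmetric-function side and the geometric side together with the tracking of constants.

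First I would set up the standard dictionary for $S=\C^2$ with its $T\cong(\Cs)^2$-action. After inverting the equivariant parameters $t_1,t_2$, the space $\cF$ becomes isomorphic to $\Lambda$ over the fraction field, the Nakajima operators $\alpha_{-n}$ go to suitably normalized power-sum multiplications $p_n$ and their adjoints, the torus-fixed classes $[\lambda]$ go to scalar multiples $c_\lambda J_\lambda$ of the integral-form Jack polynomials with $\theta=-t_1/t_2$, and the equivariant Poincar\'e pairing goes to the Jack inner product $(\,\cdot\,,\,\cdot\,)_\theta$. Under this dictionary I would use the main Theorem to identify $\bW(\cL)$ with $E^m(E^*)^{\theta-m-1}$: writing $\cL=\cO(m)$ so that $c_1(\cL)=m$ and $c_1(\cK)=-t_1-t_2$, and specializing $t_1=-\theta$, $t_2=1$, $z=1$, the factor $\exp(-\sum_n\frac{(-z)^n}{n}\alpha_{-n}(\cL))$ in \eqref{mainT} becomes a power of $E=\exp(\sum_n\frac{(-1)^n}{n}p_n)$ with exponent $m$ governed by $c_1(\cL)$, while $\exp(\sum_n\frac{z^{-n}}{n}\alpha^*_{-n}(\cL-\cK))$ becomes a power of $E^*$ whose exponent is governed by $c_1(\cL-\cK)=m+t_1+t_2$; the Serre-duality relation $\bW(\cL,z)^*=(-1)^{\bN}\bW(\cK-\cL,z^{-1})(-1)^{\bN}$ recorded above is what flips the sign of $m$ and yields the exponent $\theta-m-1$.

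With the operator identified, the left-hand side equals $(c_\lambda c_\mu)^{-1}$ times the geometric matrix element $(\bW(\cL)\,[\lambda],[\mu])$. By \eqref{defW} this is $z^{|\mu|-|\lambda|}\int_{\Hilb\times\Hilb}[\lambda]\,[\mu]\,e(\bE)$, and since $[\lambda],[\mu]$ are fixed-point classes the localization sum collapses to the single pair $(\lambda,\mu)$, where $[\lambda]|_{\lambda}=e(T_\lambda)$ cancels against the tangent Euler class and leaves simply $e(\bE)\big|_{(\lambda,\mu)}$. It remains to compute this from the fixed-point character of $\chi(\cL)-\chi(I_\lambda,I_\mu\otimes\cL)$; this character is the standard combinatorial expression, and its weights are precisely the linear forms $m+a_\lambda(\square)+1+\theta\,l_\mu(\square)$ for $\square\in\lambda$ and $m-a_\mu(\square)-\theta\,(l_\lambda(\square)+1)$ for $\square\in\mu$ after the substitution $t_1=-\theta$, $t_2=1$. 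Thus $e(\bE)\big|_{(\lambda,\mu)}$ is exactly the claimed double product.

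The step I expect to be the main obstacle is pinning down the overall constant $(-1)^{|\lambda|}\theta^{-|\lambda|-|\mu|}$, i.e. the bookkeeping of the normalizations $c_\lambda$. These are fixed by $c_\lambda^2\,(J_\lambda,J_\lambda)_\theta=e(T_\lambda)$; using the Ellingsrud--Str\o mme/Nakajima tangent weights one finds $e(T_\lambda)\big|_{t_1=-\theta,\,t_2=1}=(-1)^{|\lambda|}\theta^{2|\lambda|}\prod_{\square\in\lambda}(a_\lambda(\square)+\theta\,l_\lambda(\square)+1)(a_\lambda(\square)+\theta\,l_\lambda(\square)+\theta)$, which together with the Jack norm evaluation $(J_\lambda,J_\lambda)_\theta=\theta^{-2|\lambda|}\prod_{\square\in\lambda}(a_\lambda(\square)+\theta\,l_\lambda(\square)+1)(a_\lambda(\square)+\theta\,l_\lambda(\square)+\theta)$ forces $c_\lambda^2=(-1)^{|\lambda|}\theta^{2|\lambda|}$. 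The delicate part is then choosing the consistent square root across all $\lambda$ and combining it with the $z^{|\mu|-|\lambda|}$ prefactor so as to produce exactly $(-1)^{|\lambda|}\theta^{-|\lambda|-|\mu|}$; the appearance of the lone sign $(-1)^{|\lambda|}$ (and the attendant $\sqrt{-1}$ subtlety in the fixed-point–to–Jack normalization) is genuinely the only nontrivial accounting, and I would handle it by fixing the normalization of $[\lambda]$ once and for all and verifying the sign on a small case. Once this constant is confirmed, the corollary follows.
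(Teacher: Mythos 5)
Your proposal follows essentially the same route as the paper: the corollary is obtained by localizing the matrix element $\left(\bW(\cL)\,[I_\lambda],[I_\mu]\right)$ at the single fixed pair, identifying fixed-point classes with normalized Jack polynomials, and reading off the double product from the arm--leg character of $\bE\big|_{(I_\lambda,I_\mu)}$ --- which is exactly the paper's Lemma~\ref{Lhooks}. The only real difference is bookkeeping: the paper imports the precise normalization $[I_\lambda]\mapsto t_2^{|\lambda|}\,J_\lambda\big|_{p_i=t_1p_i}$ (with $\theta=-t_2/t_1$) from the literature rather than re-deriving it from the norm condition $c_\lambda^2(J_\lambda,J_\lambda)_\theta=e(T_\lambda)$, which disposes of the square-root/sign ambiguity you flag.
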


\noindent
We note that in the product \eqref{evalJack} the ranges $\square\in \lambda$ and $\square\in \mu$ may be 
interchanged, see the proof of Lemma \ref{Lhooks} below.

\section{Proof of Theorem \ref{t_main}}

\subsection{}
Let $\Gamma(\cL)$ denote the right-hand side of \eqref{mainT} so 
\begin{equation}
\bW(\cL) = \Gamma(\cL)
\label{mainT2}
\end{equation}
is what we need to prove. As a first step, we reduce the general formula \eqref{mainT2} to the special case of 
the equivariant cohomology of a toric surface $S$. 

Consider the universal ideal sheaf $\cI$ on $\Hilb_k S \times S$ 
and the K\"unneth components of its Chern character
$$
\sigma_i(\gamma) = \int_S \ch_{i+2}(\cI)\, \gamma  
\quad \in H^{2i+\deg \gamma}(\Hilb_k) \,.
$$
It is known that these classes generate $H^*(\Hilb_k)$ as a ring, 
that is, the monomials in these classes span $H^*(\Hilb_k)$, 
see e.g.\ \cite{LQW} and Section 6 in \cite{L}.  

For given $k$ and $l$, consider a matrix coefficient of $\bW(\cL)$ between 
two such monomials 
\begin{equation}
\label{matrix_coeff}
\left(\bW(\cL) \prod \sigma_{p_i}(\eta_i), \prod \sigma_{q_j} (\xi_j)\right) \,.
\end{equation}
We may assume that the cohomology classes $\eta_i$ and $\xi_j$ have a well-defined parity. 

Using the Grothendieck-Riemann-Roch formula, we may express \eqref{matrix_coeff} 
as an integral over 
\begin{equation}
\Hilb_k S\times \Hilb_l S \times S  \times S \times \dots \times S 
\label{prodS} 
\end{equation}
of a universal expression in characteristic classes of 
$$
\pi^*_{rs} \, \cI\,, \quad r=1,2 \quad s\ge 3 \dots\,,
$$
where $\pi_{rs}$ is the projection on the $r$th and $s$th factor in \eqref{prodS}, times
a class $\gamma_s$ of the form 
$$
\gamma_s\in\{\eta_i,\xi_j,\ch_k(\cL)\td_k(S)\} 
$$
for each of the $S$-factors in \eqref{prodS}. 

The induction scheme of Ellingsrud, G\"ottsche, and Lehn, see Section 3 in \cite{EGL}, 
gives a universal evaluation of any such integral. The result has the form 
\begin{equation}
\label{universal}
\sum_{\{3,4,\dots\}=\sqcup G_i} \pm \prod_i \int_S C_{G,i} \prod_{s\in G_i} \gamma_s\,,
\end{equation}
where the outer sum is over all partitions of the $S$-factors in \eqref{prodS} into disjoint 
groups $G_i$, the sign accounts for a possibly different ordering of odd cohomology classes 
in the original and final integral, and each $C_{G,i}\in H^*(S)$ is a universal polynomial, independent of the $\gamma_i s$,
applied to the characteristic classes of the tangent bundle of $S$. 

Next, we show that the matrix coefficients of $\Gamma(\cL)$ are given by a similar universal formula.
To do this, we use the calculus explained in Section 6 of \cite{L}, and subsequent generalizations in 
\cite{LQW} to express the operator of the cup product with $\sigma_{p}(\eta)$ in terms of suitable Nakajima operators.
We may then express the classes $\prod \sigma_{p_i}(\eta_i)$ in terms of operators of the form
\[\alpha_\lambda(\Delta_{\ell*} \gamma) = \sum_j \alpha_{\lambda_1}(\gamma(j,1))\cdots \alpha_{\lambda_\ell}(\gamma(j,\ell)),\]
applied to the vacuum. Here $\lambda$ is a generalized partition,
$\ell$ is the length of $\lambda$, $\Delta_\ell$ is the diagonal embedding,
$\gamma$ is the product of $\eta_i$ with a characteristic class of $S$,
and $\gamma(j,i)$ are the K\"unneth components of the diagonal,
$\Delta_{\ell*} \gamma = \sum_j \gamma(j,1)\otimes \cdots \otimes \gamma(j,\ell)$.

Expanding $\Gamma(\cL)$, 
it suffices to show that
\[\langle |\alpha_{\lambda^{(1)}}(\Delta_{\ell_1*} \gamma_1)\cdots 
\alpha_{\lambda^{(k)}}(\Delta_{\ell_k*} \gamma_k)| \rangle\]
is given by an expression of the form \eqref{universal} with $\gamma_s \in\{\gamma_i\}$. Expanding this expression,
and using the Heisenberg relations gives
\begin{multline*}
\left\langle \left|\alpha_{\lambda^{(1)}}(\Delta_{\ell_1*} \gamma_1)\cdots 
\alpha_{\lambda^{(k)}}(\Delta_{\ell_k*} \gamma_k)\right|\right\rangle = \\
\sum_{j_1,...,j_k} \sum_{X}
C(X) 
\prod_{(x,y)\in X} \int_S \gamma_{x_1}(j_{x_1},x_2)\cdot \gamma_{y_1}(j_{y_1},y_2) = \\
\sum_{X} C(X)  
\int_{S \times \cdots \times S} \gamma_1 \otimes \cdots \otimes \gamma_k 
\cdot f_X(p_{ij}^* \cO_\Delta)\,,   
\end{multline*}
%
%
where $X$ is a perfect matching of the set
\[\{(a,b),\, 1\leq a \leq k,\, 1 \leq b \leq l_a\},\]
$C(X)$ are numbers, and $f_X$ is some polynomial depending on $X$.
This is of the desired form.

We now show that it suffices to check the equality of $W(\cL)$ and $\Gamma(\cL)$ when $S$ is a toric variety.
First notice that grouping the $\ch_k(\cL)\td_k(S)$ with the universal polynomials $C_{G,i}$
gives an alternative expression of the same form as \eqref{universal}, but where $\gamma_s$ varies over the $\{\eta_i,\xi_j\}$,
$C_{G,i}$ is a universal polynomial in $\ch_k(\cL)$ and the characteristic classes of $S$, and $G$ partitions the set $\{1,...,d\}$,
where $d$ is the number of classes $\eta_i,\xi_j$.

Now suppose $W(\cL)$ agrees with $\Gamma(\cL)$ whenever $S$ is toric, but not for some nontoric $S$. Then
there exists a universal family of polynomials $C_{G,i}$ as in the previous paragraph, such
that \eqref{universal} vanishes as an equivariant integral whenever $S$ is a toric variety, but not on some nontoric variety.
Let $C$ be such a collection, and let $G$ be a partition such that $C_{G,i}$ is not
the zero polynomial for any $i$, and no proper refinement of $G$ also has this property. If $S$ is
a toric variety with at least $d$ fixed points $p_s$, let
\[\gamma_s = \sum_{t \in G_i} \left[p_t\right]\]
where $i$ is such that $s \in G_i$, and $[p_s]$ is the class of the fixed point $p_s$. 
These classes have the property that $\prod_s \gamma_s$ is nonzero
when $s$ varies within some $G_i$, but zero otherwise. It is simple to find toric $S,\cL$, such that
\[\prod_i \int_S C_{G,i} \prod_{s \in G_i} \gamma_s\]
is nonzero. But by the choice of $G$ and $\gamma_s$, this is the only contributing term in \eqref{universal}, which is assumed to vanish.

\subsection{}

Let $T_0 = U(1) \times \cdots \times U(1)\subset T$ be the compact torus with Lie algebra $\Lt_0$.
We have $S^{T_0}=S^{T}$ and since we now assume $S$ toric this is a finite set. 
Let $\{U_s\}$ be disjoint $T_0$-invariant open neighborhoods of the fixed points $s\in S$.
By equivariant localization and the obvious inclusion map
\begin{equation}
\bigsqcup_s \Hilb_{n_s} U_s \hookrightarrow \Hilb_n S,\quad n = \sum_s n_s,
\end{equation}
we obtain an isomorphism
\begin{equation}
  \cF_{S} \otimes_{\C[\Lt^*]}\C(\Lt^*) = 
\bigotimes_{s\in S^T} \cF_{U_s} \otimes_{\C[\Lt^*]}\C(\Lt^*)\label{factorF}
\end{equation}
Here each factor 
$$
\cF_{U_s} \cong \cF_{T_s S} 
$$
corresponds to the Hilbert scheme of points of $\C^2 \cong T_s S$ with 
the inherited torus action. 

In \eqref{factorF}, we have an obvious factorization
$$
\bW_S(\cL) = \bigotimes_{s\in S^T} \bW_{T_s S}\left(\cL\big|_{s}\right) \, .
$$
Furthermore, $\Gamma(\cL)$ factors similarly, since
\[j^*([\fA(S)]) = [\fA(U)]\, ,\]
where $j$ is the induced from the inclusion of an open neighborhood $U \subset S$, and $j^*$ is the 
restriction map on Borel-Moore homology. 
See \cite{B} or \cite{E} for a reference on equivariant Borel-Moore cohomology and intersection theory.
The corresponding decomposition
$$
\alpha_{-n}(\gamma) = \bigoplus_{s} \alpha_{-n}\left(\gamma\big|_{s}\right)\, ,
$$
leads to the desired factorization of $\Gamma(\cL)$,
which reduces our theorem to the case $S=\C^2$. 
For $S=\C^2$, the only line bundles are torus characters. 

\subsection{}

The Fock space $\cF_{\C^2}$ may be identified with symmetric functions in such a way that
Nakajima operators become the operators of multiplication by the power-sum symmetric 
functions, while the classes of torus-fixed points are mapped to properly normalized
Jack symmetric functions. Since the character of the torus action in the fiber of $\bE$
over any fixed point is easily determined (see below), the statement of the theorem 
may be rephrased purely as a statement about symmetric functions. 

Instead of attacking the symmetric function problem directly, we will first use 
geometric arguments to reduce it further to a simple special case.

\subsection{}

Consider the following setup: 
$$
S=\Pp \times \Pp\,,\quad T=\C^*\times \C^*\times \C^*\,,\quad \cL = \cO(m) \,,
$$
where $(z_1,z_2,z_3)\cdot (x,y) = (z_1x,z_2y)$, and $z_3 = e^m$ acts by scaling $\cL$.

Take two partitions $\mu$ and $\nu$ such that $|\mu|=k$ and $|\nu|=l$ and consider
\begin{equation}
w_{\mu,\nu} = w_S(\mu,\nu) = \left(\bW(\cL,1) \prod \alpha_{-\mu_i}(L_1)  \lv \rang, \prod \alpha_{-\nu_i}(L_2)  \lv \rang\right) \in \Z\,,
\label{wmn}
\end{equation}
where 
$$
L_1 =\{\pt\}\times\Pp\,, \quad L_2 = \Pp \times \{\pt\} \,. 
$$
Let also $w^{[ab]}_{\C^2}(\mu,\nu)$, $a,b\in \{0,\infty\}$, be the same expression over 
the Hilbert scheme of the chart of $\Pp \times \Pp$ by $\C^2$ that contains the point $(a,b)$.
Use the intersection of $L_1, L_2$ with $\C^2$ in place of $L_1, L_2$.

Since $w_{\mu,\nu}$ is an integral of a top-dimensional class over a complete space, it is independent 
of the equivariant parameters (including $m$) as well as the choice of 
the equivariant lifts of the classes $L_1$ and $L_2$. 
Similarly, the number
$$
g_{\mu,\nu} = \left(\Gamma(\cL,1) \prod \alpha_{-\mu_i}(L_1)  \lv \rang, \prod \alpha_{-\nu_i}(L_2)  \lv \rang\right) 
$$
may be computed in either equivariant or ordinary cohomology and, in particular, is independent of equivariant parameters. 
We can use this independence as follows. 

\subsection{}

Let us compute $w_{\mu,\nu}$ using equivariant localization. 
Each occurrence of the line $L_1$ may be lifted to two different classes in equivariant cohomology, namely, 
$$
L_1^0 =\{0\}\times\Pp\quad \textup{or} \quad L_1^\infty =\{\infty\}\times\Pp\,,
$$
and similarly for $L_2$. Such a lifting corresponds to a decomposition
\[\mu = \mu^{[0]}\sqcup \mu^{[\infty]},\quad\nu = \nu^{[0]}\sqcup \nu^{[\infty]}.\]
Any choice of lifting allows us to compute $w$, $g$ by localization, so
the answer must be independent of this lifting.
In particular, using the above factorization of $\bW(\cL)$,
\begin{equation}
\label{winduction}
w_{\mu,\nu} = \sum_{\mu^{[00]},\mu^{[0\infty]},\mu^{[\infty0]},\mu^{[\infty\infty]}}\quad
\sum_{\nu^{[00]},\nu^{[0\infty]},\nu^{[\infty0]},\nu^{[\infty\infty]}}
\prod_{a,b} w^{[ab]}_{\C^2}(\mu^{[ab]},\nu^{[ab]}),
\end{equation}
where $a,b \in \{0,\infty\}$, and
the sum is over terms such that
$\mu^{[a0]} \sqcup \mu^{[a\infty]} = \mu^{[a]}$, 
$\nu^{[0b]} \sqcup \nu^{[\infty b]} = \nu^{[b]}$,
for any fixed $\mu^{[0]}$,$\mu^{[\infty]}$,$\nu^{[0]}$,$\nu^{[\infty]}$.
Using the parallel factorization for $\Gamma$, \eqref{winduction}
also holds when $\bW$ is replaced with $\Gamma$, and $w$ is replaced with $g$.

Equation \eqref{winduction} is a collection of relations for each decomposition of $\mu$ and $\nu$.
These formulas for both $w$ and $g$ give an induction step for proving
that $w_{\C^2} = g_{\C^2}$, which is sufficient to prove
the theorem. First, setting $\mu^{[a]} = \mu$, $\nu^{[b]} = \nu$, and solving for $w^{[ab]}_{\C^2}$,
we get
\[w^{[ab]}_{\C^2}(\mu,\nu) = w_S(\mu,\nu)-...\]
where the remaining part of the sum consists of lower order terms. In other words, they involve
$w^{[cd]}_{\C^2}(\mu',\nu')$ with $\ell(\mu') \leq \ell(\mu)$, $\ell(\nu)' \leq \ell(\nu)$, and either
$\ell(\mu') < \ell(\mu)$ or $\ell(\nu') < \ell(\nu)$. If $\ell(\mu) > 1$ or $\ell(\nu) > 1$, then using
any decomposition of $\mu$ or $\nu$ into strictly smaller components yields $w_S(\mu,\nu)$ as a 
function of the lower order terms $w_{\C^2}^{[cd]}(\mu',\nu')$.
Substituting this in, we arrive at a formula for $w_{\C^2}^{[ab]}(\mu,\nu)$ in
lower order terms which also holds for $g_{\C^2}(\mu,\nu)$ whenever $\ell(\mu) > 1$ or $\ell(\nu) > 1$.
The base cases $\mu = \emptyset$, $\nu = \emptyset$ are covered by Lehn's theorem, 
so what is left is to prove the case $\ell(\mu) = \ell(\nu) = 1$.

\subsection{}\label{s_single_product}

In fact, by the induction step above, we only need to check that 
$$
\left(\bW(\cL) \, \eta, \xi\right) = \left(\Gamma(\cL) \, \eta, \xi\right)
$$
for an arbitrary pair of classes 
$$
\xi \in H_T^*(\Hilb_k \C^2)\,, \quad \eta \in H_T^*(\Hilb_l\C^2)\,, 
$$
having nonzero inner product with the classes $\alpha_{-k} \lv\rang $ and $\alpha_{-l} \lv\rang $, respectively. 

To do this computation, we first need to set up the equivariant localization. 

\subsection{}\label{s_loc_C2} 

Let $T\cong (\C^*)^2 \subset GL(2)$ be the standard maximal torus, i.e.\
$$
\Lie T = \left\{ 
\begin{pmatrix}
 t_1 & \\
& t_2 
\end{pmatrix}
\right\}
$$
In equivariant cohomology, we may identify $\cF_{\C^2}$ with symmetric polynomials over $\Q[t_1,t_2]$ by 
requiring that $\lv\rang$ corresponds to $1$ and 
$$
\alpha_{-n}(1) \mapsto p_n\,,
$$
that is, the Nakajima operators become operators of multiplication by power-sum functions. It is known, see 
\cite{LQW_Jack,Vass} and also e.g.\ \cite{OP}, that the classes of fixed points then correspond to properly 
normalized Jack polynomials. 

Concretely, take a monomial ideal 
$$
I_\lambda = (x_1^{\lambda_i} x_2^{i-1}) \subset \C[x_1,x_2] \,.
$$
Then 
$$
[I_\lambda] \mapsto t_2^{|\lambda|} \, J_\lambda \Big |_{p_i = t_1 p_i} \,, 
$$
where $[I_\lambda]$ is the cohomological dual to the fixed point $I_\lambda \in \Hilb_n \mathbb{C}^2$, 
and $J_\lambda$ is the integral Jack polynomial with parameter
$$
\theta = - t_2/t_1 \,.
$$
The corresponding inner-product is
\begin{equation}
\label{jsp}
\left(p_\lambda,p_\mu\right)_{t_1,t_2} = \delta_{\lambda,\mu}\frac{(-1)^{|\lambda|-\ell(\lambda)}}{\zee(\lambda)t_1^{\ell(\lambda)}t_2^{\ell(\lambda)}},
\end{equation}
where
\[\zee(\lambda) = \frac{1}{\aut(\lambda)\prod_i \lambda_i}\]
is the usual factor \cite{Mac}.

\subsection{}
Our next goal is to compute the character of the $T$-module 
$$
\bE\Big|_{(I_\lambda,I_\mu)} = \chi(\cO,\cO) - \chi(I_\lambda,I_\mu) \,.
$$
Given a $T$-module $V$ with finite-dimensional weight spaces, 
we denote by $\left[ V\right]$ denote its character in $\C[[z_1,z_1^{-1},z_2,z_2^{-1}]]$. 
Concretely, $[V]$  is represented
by the trace of element $(z_1,z_2)\in T$ in its action on $V$. 

\begin{Lemma}\label{Lhooks} 
\begin{equation}
 \label{sum_hooks}
\left[\bE\Big|_{(I_\lambda,I_\mu)}\right] = 
\sum_{\square\in \lambda} z_1^{a_\lambda(\square)+1} \, z_2^{-l_\mu(\square)}+
\sum_{\square\in \mu} z_1^{-a_\mu(\square)}\, z_2^{l_\lambda(\square)+1}
 \,. 
\end{equation}
\end{Lemma}

\begin{proof}

By Riemann-Roch, we have 
$$
\chi(I_\lambda,I_\mu)  = \frac{[I_\lambda^{\vee}]\, [I_\mu] }{[\cO^{\vee}]}=\frac{{[I_\lambda]^\vee}\, [I_\mu] }{[\cO]^{\vee}},
$$
where ${f(z_1,z_2)}^{\vee} = f(z_1^{-1},z_2^{-1})$. Substituting
$$
[I_\mu] = \sum_{i\ge 1} \frac{z_1^{-\mu_i} \, z_2^{1-i}}{1-z_1^{-1}}\,, 
\quad 
[I_\lambda] = \sum_{j\ge 1} \frac{z_2^{-\lambda'_j} \, z_1^{1-j}}{1-z_2^{-1}}\,, 
$$
where $\lambda'$ is the transposed diagram, into the above formula yields
$$
\left[ \bE\Big|_{(I_\lambda,I_\mu)}\right] = \sum_{i,j\ge 1} z_1^{j-\mu_i} \, z_2^{\lambda'_j-i+1}  - \sum_{i,j\ge 1} z_1^{j} \, z_2^{1-i} \,.
$$

Now observe the terms for which the exponent of $z_1$ is $\le 0$ occur only in the first sum and correspond to the second 
sum in \eqref{sum_hooks}. For these terms then, the two formulas are equal.
The remaining terms may be determined using Serre duality which implies
$$
\left[ \bE\Big|_{(I_\lambda,I_\mu)} \right]  =  z_1 z_2 \left[  \bE\Big|_{(I_\mu,I_\lambda)}\right]^\vee \,.
$$
Note that the same argument with the roles of $z_1$ and $z_2$ interchanged yields the same formula 
except the ranges of summation $\square\in \lambda$ and $\square\in \mu$ are interchanged. 
\end{proof}

The Lemma shows that \eqref{mainT} for the equivariant cohomology of 
$\Hilb_n\C^2$ is equivalent to the Pieri-type 
formula \eqref{evalJack}.  

We also note that the character is a sum of exactly $|\mu|+|\lambda|$ terms, 
implying the vanishing \eqref{vanish}.

\subsection{}

It is well known that Jack polynomials labeled by single row or column have nonzero inner product with the 
power-sum function of the same degree (this is true already in the Schur function case $\theta=1$). 
Therefore, as we saw in Section \ref{s_single_product}, it suffices to prove \eqref{evalJack} 
in the special case 
$$
\lambda = (1^k)\,, \quad \mu = (l) \,.
$$
Since multiplication by a function adds squares to diagrams and there are only two diagram fitting 
inside both $\lambda$ and $\mu$, we have 
\begin{multline*}
 \left(E^m (E^*)^{\theta-m-1} J_\lambda, J_\mu \right)_\theta = \\ 
(E^m,J_\mu)_\theta \, (E^{\theta-m-1},J_\lambda)_\theta + \theta \, (E^m\,p_1,J_\mu)_\theta \, (E^{\theta-m-1}\,p_1,J_\lambda)_\theta \,, 
\end{multline*}
the factor of $\theta$ in the second term coming from $(p_1,p_1)_\theta^{-1} = \theta$. 

We have 
$$
(E^m,J_\mu)_\theta  = \theta^{-l} \prod_{i=0}^{l-1} (m-i)\,, \quad 
(E^m\, p_1,J_\mu)_\theta  = l \, \theta^{-l} \, \prod_{i=0}^{l-2} (m-i)\,,
$$
which is an elementary statement about polynomials in one variable. Dually, 
$$
(E^m,J_\lambda)_\theta  = \theta^{-k} \prod_{i=0}^{k-1} (m+i\theta)\,, \quad 
(E^m\, p_1,J_\lambda)_\theta  = l \, \theta^{-k} \, \prod_{i=0}^{k-2} (m+i\theta)\,. 
$$
Replacing $m$ by $\theta-1-m$ in the above formula and summing up gives the right answer. This concludes the proof.

\end{document}